  \setlist[description]{leftmargin=1cm,labelindent=1cm}
\newtheorem{theorem}{Theorem}[section]
\theoremstyle{definition}
\let\oldmarginpar\marginpar
\renewcommand\marginpar[1]{\-\oldmarginpar[\raggedleft\footnotesize #1]%
{\raggedright\footnotesize #1}}
\newcommand{\ZZ}{\mathbb{Z}}
\newcommand{\FF}{\mathbb{F}}
\DeclareMathOperator{\supp}{supp}
\DeclareMathOperator{\Out}{Out}
\title{Not all finitely generated groups have universal acylindrical actions}
\author{Carolyn R. Abbott}
\date{}
\begin{document}
\maketitle

\begin{abstract}
The class of acylindrically hyperbolic groups, which are groups that admit a certain type of non-elementary action on a hyperbolic space, contains many interesting groups such as non-exceptional mapping class groups and $\Out(\FF_n)$ for $n\geq 2$.  In such a group, a generalized loxodromic element is one that is loxodromic for some acylindrical action of the group on a hyperbolic space.  Osin asks whether every finitely generated group has an acylindrical action on a hyperbolic space for which all generalized loxodromic elements are loxodromic.  We answer this question in the negative, using Dunwoody's example of an inaccessible group as a counterexample.
\end{abstract}

\section{Introduction}

The action of a group $G$ on a metric space $X$ is \emph{acylindrical} if for all $\epsilon>0$ there exist constants $M,N\geq 0$ such that for all $x,y\in X$ with $d(x,y)\geq M$, the number of elements $g\in G$ satisfying $d(x,gx)\leq \epsilon$ and $d(y,gy)\leq \epsilon$ is at most $N$.  We say that a group is {\it acylindrically hyperbolic} if it admits a non-elementary acylindrical action on a hyperbolic space.

This definition is due to Osin, in \cite{O}, and it unifies several previous notions of groups which admit a non-elementary action on a hyperbolic space satisfying certain properties.  He shows that the class of groups satisfying properties such as Bestvina and Fujiwara's weak proper discontinuity (WPD) \cite{BF}, Hamenstadt's weak acylindricity \cite{H}, and Sisto's notion of the existence of weakly contracting elements \cite{S} are all equivalent and coincide with the class of acylindrically hyperbolic groups.

Acylindrically hyperbolic groups include a variety of interesting groups, such as non-exceptional mapping class groups, $\Out(\FF_n)$ for $n\geq 2$, right-angled Artin groups which are neither cyclic nor directly decomposable, many fundamental groups of compact 3-manifolds, 1-relator groups with at least three generators, non-elementary hyperbolic groups, and relatively hyperbolic groups.

Recall that given a group $G$ acting on a hyperbolic space $X$, an element $g\in G$ is called {\em elliptic} if the orbit $g\cdot s$ is bounded for some (equivalently any) $s\in X$, and it is called {\em loxodromic } if the map $\ZZ\to X$ defined by $n\mapsto g^ns$ is a quasi-isometric embedding for some (equivalently any) $s\in X$.  It is shown by Bowditch \cite{B} that every element of a group acting acylindrically on a hyperbolic space is  either elliptic or loxodromic.  However, an element of $G$ may be elliptic for some actions and loxodromic for others.  Consider, for example, the free group on two generators acting on its Cayley graph and acting on the Bass-Serre tree associated to the splitting $\FF_2\simeq \langle x\rangle *\langle y\rangle$.  In the former action, every non-trivial element is loxodromic, while in the latter action, all powers of $x$ and $y$ are elliptic.

An element is called \emph{generalized loxodromic} if it acts loxodromically for \emph{some} acylindrical action on a hyperbolic space.  We say a group $G$ has a {\it universal acylindrical action} if it has an acylindrical action on a hyperbolic space such that all generalized loxodromic elements act loxodromically.  In the case of the mapping class group, it is known that an element is generalized loxodromic if and only if it is pseudo-Anosov, in which case it acts loxodromically on the curve complex, so this acylindrical action is universal.  Non-elementary hyperbolic groups, subgroups of hyperbolic groups, and groups hyperbolic relative to a collection of subgroups, none of which are virtually cyclic or acylindrically hyperbolic, are known to admit universal acylindrical actions.  In addition, it follows from the work of Kim-Koberda in \cite{KK} that the acylindrical action of a right-angled Artin group on the extension graph is universal.

It should be noted that the question of universal acylindrical actions is trivial for non-acylindrically hyperbolic groups, as every group has an acylindrical action on a point, and this action is universal for non-acylindrically hyperbolic groups, which have no generalized loxodromic elements.  In addition, it is straight-forward to construct examples of infinitely generated acylindrically hyperbolic groups without universal acylindrical actions; Osin gives one example in \cite{O}.


In \cite[Question 6.11]{O} Osin asks whether all finitely generated groups have a universal acylindrical action on a hyperbolic space.  We answer this question in the negative:

\begin{theorem}\label{thm:1} Dunwoody's inaccessible group does not admit a universal acylindrical action. \end{theorem}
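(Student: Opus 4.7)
Write $J$ for Dunwoody's inaccessible group, and suppose for contradiction that $J$ admits a universal acylindrical action on a hyperbolic space $X$. The plan is to use inaccessibility to exhibit a family of generalized loxodromic elements whose joint behavior in any single acylindrical action is incompatible with the acylindricity inequality.

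Inaccessibility of $J$ means that $J$ admits reduced graph-of-groups decompositions over finite edge subgroups of unbounded complexity. For each such decomposition, the associated Bass-Serre tree $T$ is a hyperbolic space on which $J$ acts with finite edge stabilizers, hence acylindrically; any element translating an edge of $T$ is loxodromic for this action, and hence is a generalized loxodromic element of $J$. I would therefore produce a sequence of splittings $\mathcal{G}_1,\mathcal{G}_2,\ldots$ with Bass-Serre trees $T_1,T_2,\ldots$, and for each $n$ an element $g_n$ loxodromic on $T_n$, arranged (using successive refinements afforded by Dunwoody's construction) so that $g_n$ is elliptic on $T_m$ for every $m<n$, and so lies in a vertex stabilizer of $T_m$ that becomes genuinely smaller as $m$ increases.

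Under the universality hypothesis, every $g_n$ is loxodromic on $X$, with some quasi-axis $\ell_n\subset X$. The next step is to translate the fact that $g_n$ sits in successively deeper vertex stabilizers of the trees $T_m$ for $m<n$ into a metric statement about $\ell_n$: elements in these vertex stabilizers commute with $g_n$ up to a finite edge subgroup, and so in the action on $X$ they must translate long subsegments of $\ell_n$ by a uniformly bounded amount. Iterating this over all $m<n$ produces, for each $n$, a correspondingly large collection of group elements witnessing a failure of the $(M,N)$-inequality for $J\curvearrowright X$, which as $n\to\infty$ contradicts the definition of acylindricity.

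The principal obstacle is the quantitative passage from Dunwoody's purely combinatorial splitting data to metric estimates on the unspecified hyperbolic space $X$. The key technical claim to establish is that, in any acylindrical action in which $g_n$ is loxodromic, vertex stabilizers of finite-edge-group splittings of $J$ that contain $g_n$ act on $\ell_n$ with uniformly bounded orbit displacement on long segments; once such an estimate is proved with constants independent of the action, the unbounded supply of such stabilizers forced by inaccessibility closes the argument. I expect the bulk of the work to lie in formulating this bounded-displacement estimate with enough uniformity that it applies to every hypothetical universal acylindrical action on a hyperbolic space, not just to the Bass-Serre actions themselves.
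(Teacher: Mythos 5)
There is a genuine gap, and it is exactly the point you yourself flag as ``the key technical claim to establish.'' The paper's proof does not need any new metric estimate on the unknown space $X$: it quotes Osin's theorem that for \emph{any} acylindrical action of a group $G$ on a hyperbolic space there is a single constant $N$ such that the centralizer of every loxodromic element contains a cyclic subgroup of index less than $N$. With that black box, disproving universality reduces to pure algebra inside $J$: exhibit a sequence of generalized loxodromic elements $g_i$ whose centralizers $C_J(g_i)$ contain no cyclic subgroup of uniformly bounded index. Your proposal instead tries to rederive a bounded-displacement/uniform-centralizer statement from scratch and defers it, so as written it is a programme rather than a proof; the entire difficulty of the problem has been pushed into the unproved estimate.

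Moreover, the mechanism you sketch for producing the violating elements is not sound as stated. That $g_n$ is elliptic on an earlier tree $T_m$, i.e.\ lies in some vertex stabilizer of $T_m$, gives no relation between $g_n$ and the other elements of that stabilizer: they need not commute with $g_n$ even ``up to a finite edge subgroup,'' and membership in a common vertex group of some splitting imposes no bound whatsoever on how such elements displace points of a quasi-axis $\ell_n$ in an unrelated action on $X$. What actually makes the argument work in the paper is a concrete computation in Dunwoody's group: for each $i$ one takes $g_i=\phi_it_i\psi_is_i$ with $\phi_it_i\in G_i\setminus K_i$ and $\psi_is_i\in G_{i+1}\setminus K_i$ (so $g_i$ is loxodromic on the $i$-th Bass-Serre tree, which is an acylindrical action since the edge groups are finite), choosing the supports of $\phi_i,t_i,\psi_i,s_i$ to be small and clustered near $-i$, so that every permutation supported on $[-i+3,i]$ \emph{genuinely} centralizes $g_i$. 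This puts a finite symmetric group of order $(2i-2)!$ inside $C_J(g_i)$, which is incompatible with a uniform $N$ as above, contradicting universality. To repair your outline you would need both to import (or reprove) Osin's uniform centralizer theorem and to carry out such an explicit construction of loxodromics with large centralizers in $J$; the ellipticity pattern you request ($g_n$ elliptic on all earlier trees) is neither needed nor, by itself, sufficient.
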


{\bf Acknowledgements:} I would like to thank Denis Osin for useful comments on an earlier draft.

 \section{Dunwoody's inaccessible group}

Recall that a finitely generated group is accessible if the process of taking iterated splittings over finite subgroups terminates in finitely many steps.  Dunwoody \cite{D} constructs the following group as an example of a finitely generated group that is not accessible.  
 
Let $H$ be the group of permutations of the integers generated by the transposition $(0\,1)$ and the map $s(i)=i+1$, and let $H_i\subset H$ be the subgroup of permutations supported on $[-i,i]$, so that $H_i$ is the group of all permutations of $[-i,i]$.  Let $H_\omega=\cup_{i=0}^\infty H_i$ be the group of finitely supported permutations of $\ZZ$.  Let $V$ be the group of maps $\ZZ\to \ZZ/2\ZZ$, and let $V_i\subset V$ be the subgroup of maps supported on $[-i,i]$.  Then $H_i$ acts on $V_i$ via $^hv(j)=v(h^{-1}(j))$, so we can form the semidirect product $G_i'=V_i\rtimes H_i$, with the group operation $(v_0h_0)(v_1h_1)=(v_0{}^{h_0}v_1)(h_0h_1)$.   Let $z_i$ be the map that sends every element in $[-i,i]$ to $-1$ and all other elements to 1.  Then $z_i$ is central in $G_i'$, so we can form the direct product $K_i=\langle z_i\rangle \times H_i$.  Let $G_i$ be pairwise disjoint isomorphic copies of $G_i'$, and identify the subgroup $K_i$ with its image in $G_i$ and $G_{i+1}$.  We can then form the amalgamated product \[G_1*_{K_1} G_2*_{K_2}\cdots *_{K_n} G_{n+1}*_{K_{n+1}}\cdots.\]  Choose any $n$, collapse everything to the right of $K_n$, and call it $Q_n$, so that this product becomes \[G_1*_{K_1} G_2*_{K_2}\cdots *_{K_n}Q_n.\]  Notice that (in $G_{i+1}$) $K_i\cap K_{i+1}=H_i$, and since the $H_i$ are nested, $H_\omega\subset Q_n$.  We are now ready to define Dunwoody's group $J$ as the amalgamated free product \begin{align}J=G_1*_{K_1} G_2*_{K_2}\cdots *_{K_n} J_n,\end{align} where $J_n=Q_n*_{H_\omega}H$.  Notice that $J$ does not depend on the choice of $n$.

The group $J$ is finitely generated by $G_1$, which is finite, and $H$, which is finitely generated, but is not finitely presented.  It is shown by by Bowditch in \cite{B2} that given a spitting as in equation (1), $J$ is hyperbolic relative to the vertex groups appearing in that splitting, and thus $J$ is acylindrically hyperbolic.  As an amalgamated product over finite subgroups, it can be easily derived from \cite[Lemma 4.2]{MO} that the action on the Bass-Serre tree associated to any such splitting above is acylindrical.

The following theorem of Osin gives a necessary condition for there to be a universal acylindrical action.
\begin{theorem} [Osin \cite{O}] 
Suppose that a group $G$ acts acylindrically on a hyperbolic space. Then there exists a constant $N$ with the following property. Let $g$ be a loxodromic element. Then the centralizer $C_G(g)$ contains a cyclic subgroup of index less than $N$.
\end{theorem}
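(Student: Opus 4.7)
The plan is to analyze $C_G(g)$ via the dynamics of $g$ on the Gromov boundary $\partial X$. The loxodromic element $g$ has exactly two fixed points $g^\pm \in \partial X$, and any $h \in C_G(g)$ fixes both pointwise: for instance, $h g^+ = h \lim_n g^n x_0 = \lim_n g^n (h x_0) = g^+$, where the last equality uses that $g^+$ is the attracting fixed point of $g$ regardless of starting point. In particular, $C_G(g)$ preserves a quasi-axis $\gamma$ of $g$ up to bounded Hausdorff distance, so the entire centralizer admits a coarse action by isometries on the quasi-line $\gamma$.

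I would then fix a basepoint $x_0$ on $\gamma$ and build a near-homomorphism $\psi : C_G(g) \to \RR$ measuring signed displacement along $\gamma$: since $h x_0$ lies at bounded distance from $\gamma$, it is close to some $g^{n_h} x_0$, and one sets $\psi(h) = n_h \tau(g)$ up to a bounded error. The image of $\psi$ is a discrete subset of $\RR$, using the standard consequence of acylindricity that the translation length of any loxodromic element is bounded below by some $\tau_0 > 0$ depending only on $\delta$ and the acylindricity constants; the image is therefore infinite cyclic, and nontrivial because $\psi(g) = \tau(g) \ne 0$.

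Finally, I would bound $|\ker(\psi)|$ directly from acylindricity. An element $h \in \ker(\psi)$ moves $x_0$ by at most a uniform constant $D$ (coming from the quasi-geodesic and hyperbolicity data), and since $hg = gh$ is an isometry, $d(g^k x_0, h g^k x_0) = d(x_0, h x_0) \le D$ for every $k$. Choosing $k$ so that $d(x_0, g^k x_0) \ge M$ for the acylindricity constant $M$ corresponding to $\epsilon = D$, acylindricity yields $|\ker(\psi)| \le N$. From the extension $1 \to \ker(\psi) \to C_G(g) \to \ZZ \to 1$, a standard argument extracts a cyclic subgroup of $C_G(g)$ of index bounded by a function of $N$. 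The main obstacle is the \emph{uniformity} of $N$ across all loxodromic $g$: the uniform lower bound $\tau_0$ on translation lengths is precisely what allows $D$, $M$, and hence $N$ to be chosen once and for all, rather than in a $g$-dependent manner that would give no useful result when $\tau(g)$ is small.
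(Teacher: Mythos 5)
First, note that the paper does not prove this statement: it is imported verbatim from Osin \cite{O}, where it is established via the structure of the maximal virtually cyclic subgroup $E(g)$ containing a loxodromic $g$, together with Bowditch's uniform positive lower bound $\tau_0$ on stable translation lengths for acylindrical actions. Your sketch follows the same general geometry (fixed pair in $\partial X$, coarse action of $C_G(g)$ on a quasi-axis, acylindricity counting using $d(g^kx_0,hg^kx_0)=d(x_0,hx_0)$ for $h\in C_G(g)$, Bowditch's bound for uniformity), and those ingredients are all correct. But two of your steps do not hold as stated.

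The structural step is a genuine gap: $\psi(h)=n_h\tau(g)$ is only a coarse map (a quasimorphism up to bounded error), not a homomorphism, so its image being a discrete subset of $\RR$ does not make it ``infinite cyclic,'' the set $\ker(\psi)=\{h: n_h=0\}$ need not be a subgroup, and the extension $1\to\ker(\psi)\to C_G(g)\to\ZZ\to 1$ does not exist as written. Concretely, take $\FF_2=\langle a,b\rangle$ acting on its Cayley graph and $g=a^n$: then $C_G(g)=\langle a\rangle$, and $\ker(\psi)=\{a^k:|k|\lessapprox n/2\}$, which is not a subgroup and has roughly $n$ elements. The same example kills the uniformity step: your claim that every $h\in\ker(\psi)$ moves $x_0$ by a uniform constant $D$ is false, since $a^{\lfloor n/2\rfloor}\in\ker(\psi)$ moves $x_0$ by about $n/2$. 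Proximity to the quasi-axis only locates $hx_0$ to within roughly $d(x_0,gx_0)/2$ of an orbit point $g^{n_h}x_0$, and $d(x_0,gx_0)$ is unbounded above over loxodromic $g$; Bowditch's $\tau_0$ protects against \emph{small} translation lengths, which is not where the problem lies. Indeed $|C_G(a^n):\langle a^n\rangle|=n$ shows that no bound pegged to $g$ itself (or to $\tau(g)$) can be uniform.

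The repair is exactly the content of Osin's argument that your sketch bypasses: the bounded-index cyclic subgroup must be built from a loxodromic element of $C_G(g)$ of (nearly) minimal translation length --- which is at least $\tau_0$ by Bowditch --- rather than from $g$, and the finite piece must be taken to be the set of \emph{elliptic} elements of $C_G(g)$ (equivalently, those acting with translation number zero on the common quasi-axis), which one shows is a finite normal subgroup of uniformly bounded order; this uses a quasi-axis with uniform constants, obtained by choosing the basepoint to nearly minimize displacement, another point your sketch takes for granted. With that finite normal subgroup $K$ and an infinite cyclic quotient in hand, your final extension argument (a lift of a generator gives a cyclic subgroup of index at most $|K|\le N$) is fine. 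As written, however, the proposal would not yield a uniform $N$, so the key assertion of the theorem is not reached.
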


Thus to show that the group $J$ does not admit a universal acylindrical action, it suffices to find a sequence of elements $g_1,g_2,\dots$ in $J$ so that each is loxodromic for some acylindrical action on a hyperbolic space, and for any cyclic subgroup, $A_i\subseteq C_J(g_i)$, we have the index $|C_J(g_i):A_i|\to \infty$.

\begin{proof}[Proof of Theorem \ref{thm:1}]

Consider the decomposition $J=G_1*_{K_1} G_2*_{K_2}\cdots *_{K_i}G_{i+1}*_{K_{i+1}} J_{i+1}$.  Then $J$ acts acylindrically on the Bass-Serre tree associated to this splitting, and the product of any two elements in distinct vertex groups that are not contained in a common edge group is a loxodromic element for this action \cite{Se}. 

Let $\phi_i \in V_i\setminus\langle z_i\rangle$, $\psi_i\in V_{i+1}\setminus \langle z_{i}\rangle$, $t_i\in H_i$ and $s_i\in H_{i+1}$, so that $\phi_it_i\in G_i\setminus K_i$ and $\psi_is_i\in G_{i+1}\setminus K_i$.  Then $\phi_it_i\psi_is_i$ is a loxodromic element for the action on the Bass-Serre tree associated to the above splitting.   Let $p_i\in H_i\subset G_i\cap G_{i+1}$. In order for $p_i\in C_J(\phi_it_i\psi_is_i)$, we need $(p_i)(\phi_it_i\psi_is_i)=(\phi_it_i\psi_is_i)(p_i)$.  Simplifying each side yields 
\begin{align*} (p_i)(\phi_it_i\psi_is_i) &=[(p_i\phi_i)t_i](\psi_is_i)\\
&=[^{p_i}\phi_ip_it_i](\psi_is_i) \\
&= {}^{p_i}\phi_i[p_it_i\psi_is_i] \\
&=(^{p_i}\phi_i)\cdot\left(^{p_it_i}\psi_ip_it_is_i\right),
\end{align*}
where $^{p_i}\phi_i\in G_i$ and $^{p_it_i}\psi_ip_it_is_i\in G_{i+1}$.  Similarly, 
\begin{align*} (\phi_it_i\psi_is_i)(p_i)&=(\phi_it_i)(\psi_is_ip_i) \\
&=(\phi_i)(t_i\psi_is_ip_i) \\
&=(\phi_i)\cdot \left(^{t_i}\psi_it_is_ip_i\right),
\end{align*}
where $\phi_i\in G_i$ and $^{t_i}\psi_it_is_ip_i\in G_{i+1}$.
Now, $(^{p_i}\phi_i)\cdot\left(^{p_it_i}\psi_ip_it_is_i\right)=(\phi_i)\cdot \left(^{t_i}\psi_it_is_ip_i\right)$ if the following two conditions are satisfied:
\begin{description}
\item[(a)] $^{p_i}\phi_i=\phi_i$ as elements of $V_i\subset G_i$, i.e., as maps $\ZZ\to \ZZ/2\ZZ$ 
 \item[(b)] $^{p_it_i}\psi_ip_it_is_i={}^{t_i}\psi_it_is_ip_i$ as elements of $G_{i+1}$.  That is, $^{p_it_i}\psi_i={}^{t_i}\psi_i$ as elements of $V_{i+1}$ and $p_it_is_i=t_is_ip_i$ as elements of $H_{i+1}$, i.e., as permutations of $\ZZ$.
\end{description}
To satisfy (a), it suffices to have $\supp(p_i)\cap \supp(\phi_i)=\emptyset$.  To satisfy (b), it suffices to have $\supp(p_i)\cap \supp({}^{t_i}\psi_i)=\emptyset$ and $\supp(p_i)\cap \supp(t_is_i)=\emptyset$.

Now consider the elements 
\begin{align*}\phi_i(j)&=\begin{cases} -1 & j\in\{-i+1,-i+2\} \\ 1 & \textrm{else}\end{cases} \\ t_i&=(-i,\,-i+1)\\
\psi_i(j)&=\begin{cases} -1 & j\in\{-i,-i+1\} \\ 1 & \textrm{else}\end{cases} \\ s_i&=(-i-1,\,-i) 
\end{align*}  Then $\supp(\phi_i)=\{-i+1,-i+2\}$, $\supp(t_i)=\{-i,-i+1\}$, $\supp(\psi_i)=\{-i,-i+1\}$, and $\supp(s_i)=\{-i-1.-i\}$.  Notice that $^{t_i}\psi_i=\psi_i$.  Let $p_i\in H_i$ be any permutation with $\supp(p_i)\subseteq[-i+3,i]$.  Then (a) and (b) are satisfied, and so $p_i\in C_J(\phi_it_i\psi_is_i)$.  Thus the group of permutations of $[-i+3,i]$ is a finite subgroup of $C_J(\phi_it_i\psi_is_i)$.  But this subgroup has size $(2i-2)!$, which goes to infinity as $i$ does.  Therefore, for any cyclic subgroup, $A_i\subseteq C_J(\phi_it_i\psi_is_i)$, we have $|C_J(\phi_it_i\psi_is_i):A_i|>(2i-2)!$, which goes to infinity.

\end{proof}

Our method relies strongly on the fact that $J$ is inaccessible.  It is not known whether all finitely presented groups, which are necessarily accessible \cite{D2}, admit a universal acylindrical action.

 \end{document}